\numberwithin{equation}{section}
\theoremstyle{plain}
\newtheorem{thm}{Theorem}[section]
\newtheorem{prop}[thm]{Proposition}
\newtheorem{lem}[thm]{Lemma}
\theoremstyle{definition}
\newcommand{\abs}[1]{\left|#1\right|}
\newcommand{\dual}[2]{\left\langle#1,#2\right\rangle}
\newcommand{\norm}[1]{\left\|#1\right\|}
\newcommand{\lNnn}{\lVert {\hskip -0.1em} \lvert}
\newcommand{\rNnn}{\rvert {\hskip -0.1em} \rVert}
\newcommand{\tnorm}[1]{\lNnn #1 \rNnn}
\newcommand{\R}{\mathbb{R}}
\newcommand{\Poly}{\mathbb{P}}
\newcommand{\diam}{\mathrm{diam}}
\newcommand{\Span}{\mathrm{span}}
\newcommand{\supp}{\mathrm{supp}}
\newcommand{\tri}{\mathcal{T}}
\newcommand{\mesh}{\mathcal{T}}
\newcommand{\nodes}{\mathcal{N}}
\newcommand{\verts}{\mathcal{V}}
\newcommand{\el}{K}                         
\newcommand{\fa}{F}                         
\newcommand{\Afaces}{\mathcal{F}} 
\newcommand{\faces}{\mathcal{F}_\Omega} 
\newcommand{\FEspace}{S}
\newcommand{\ShapePar}{\mu_\tri}
\newcommand{\interp}{\Pi} 
\newcommand{\transf}{G}
\newcommand{\Res}{R}
\newcommand{\Estim}{\mathcal{E}}
\newcommand{\bub}{b}
\newcommand{\DiscrIm}{\tilde{\mathcal{S}}}
\newcommand{\Op}{\mathcal{L}}
\newcommand{\Ddelta}{\boldsymbol{\delta}}
\newcommand{\vphi}{\varphi}
\newcommand{\veps}{\varepsilon}
\title[Robust error-dominated oscillation]{Robust a posteriori error estimators with error-dominated oscillation for the reaction-diffusion equation}
\author[F.~Tantardini]{Francesca Tantardini}
\author[R.~Verf\"urth]{R\"udiger Verf\"urth}
\address[R\"udiger Verf\"urth]
 {Fakult\"at f\"ur Mathematik\\
  Ruhr-Universit\"at Bochum\\
  Uni\-ver\-si\-t\"ats\-stra{\ss}e 150\\
  44801 Bochum\\
  Germany}
\email[R\"udiger Verf\"urth]{ruediger.verfuerth@ruhr-uni-bochum.de}
\urladdr[R\"udiger Verf\"urth]{homepage.ruhr-uni-bochum.de/ruediger.verfuerth}
\begin{document}

\begin{abstract}
We apply the ideas in \cite{Kreuzer.Veeser:16} to derive a robust a posteriori error estimator for the reaction-diffusion equation. The estimator together with the corresponding oscillation yields global upper and local lower bounds for the error in the energy norm, and the involved constants do not depend on the ratio of reaction to diffusion.  In particular the new oscillation is also bounded by the error in a robust way. 

\end{abstract}
\maketitle
 
\section{Introduction}


We consider the singularly perturbed reaction-diffusion equation with homogeneous Dirichlet boundary condition 
\begin{equation}
\label{rd-intro}
-\Delta u +\kappa^2 u=f \quad\textrm{in }\Omega, \qquad u=0 \quad\textrm{on }\partial\Omega
\end{equation}
where the reaction parameter $\kappa>0$ is assumed to be constant. Such a problem arises for example in the modeling of thin plates and chemical processes and in the time discretization of the heat equation, and is characterized by the presence of boundary layers dependent on the parameter $\kappa$. In order to catch these particular features with a numerical solution at a minimal cost it is advisable to use an adaptive algorithm, whose performance should not downgrade when the parameter $\kappa$ approaches extreme values.  

In this paper we are concerned with Galerkin approximations to \eqref{rd-intro} belonging to a conforming finite element space. An essential ingredient in the development of an adaptive algorithm are a posteriori error estimators that indicate where the mesh should be refined in order to reduce the error. It is important that the estimators provide global upper and local lower bounds for the error and that they are robust in the sense that the ratio between upper and lower bound is independent of $\kappa$. A posteriori error estimators with these properties with respect to the reaction-diffusion norm
\[
\tnorm{\cdot}^2:=\norm{\nabla \cdot}^2+\kappa^2\norm{\cdot}^2
\]
 have been derived in \cite{Verfuerth:98}, see also  \cite{Ainsworth.Babuska:99, Ainsworth.Vejchodsky:11} for fully computable upper bounds.

The above mentioned error estimators are spoiled by the so called oscillation, which measures the distance, usually in a weighted $L^2$-norm, between the right-hand side $f$ and a polynomial approximation thereof. The oscillation can decrease faster than the error, if  $f$ is sufficiently regular, but it can also be substantially bigger than the error itself \cite{CDN:12}, even if one employs the more natural $H^{-1}$-norm. The recent approach of Kreuzer and Veeser \cite{Kreuzer.Veeser:16} introduces a new  oscillation that is bounded by the error. The choice of the approximants is not restricted to piecewise polynomials and depends on the discrete space and on the differential operator. The resulting error estimator is therefore a closer bound to the error. 

In this paper, we apply this approach to the reaction-diffusion equation \eqref{rd-intro}. Exploiting the squeezed bubble functions of Verf\"urth \cite{Verfuerth:98}, we derive a robust error estimator where the related oscillation is bounded by the error. 

Yet, it is not trivial to prove robust convergence rates of an adaptive algorithm driven by these estimators. To our knowledge, the only robust algorithm for \eqref{rd-intro} has been proposed by Stevenson \cite{Stevenson:05rd}. It uses the estimators in \cite{Verfuerth:98}, which have the form
\begin{equation}
\label{alt-res}
\begin{aligned}
\Estim^2(U,\el)&:=\min\{h_\el,\kappa^{-1}\}^2\norm{f_\el-\kappa^2 U}^2_{\el}\quad&&\text{(element residual)}
\\
&\quad\quad
+\frac{1}2\sum_{\fa\subset\el}\min\{h_\fa, \kappa^{-1}\}\norm{[\nabla U\cdot n_\fa]}_{\fa}^2\quad&&\text{(jump residual)}
\end{aligned}
\end{equation}
where $U$ is the piecewise affine and continuous Galerkin approximation to the solution of \eqref{rd-intro}, and $f_\el\in\R$ approximates $f$.   Adapting the techniques for the Poisson equation in \cite{Stevenson:07} the algorithm of Stevenson is shown to be convergent independently of $\kappa$ with optimal rates with respect to the approximability of $f$ and $u$. The key step in \cite{Stevenson:05rd} is the derivation of a lower bound for the difference of two discrete solutions in terms of the indicators on the refined area. The proof uses that the jump residual can be bounded by the element residual when the mesh-size is big with respect to $\kappa^{-1}$. The technique seems however not easily extendable to the higher order case. Because of the different structure of our estimator, the approach of \cite{Stevenson:05rd} does not carry over immediately. Nonetheless, we believe that having an error dominated oscillation is advantageous.

The rest of the paper is organized as follows: In \S \ref{S:prob_not} we introduce the notation  and recall the problem and some auxiliary results, in \S \ref{S:a_post} we derive the a posteriori error estimator and in \S \ref{S:conc} we draw the conclusion.

\section{Problem setting and auxiliary results}
\label{S:prob_not}


Assume $\Omega\subset\R^d$, $d\geq2$, is a 
bounded domain with Lipschitz polygonal boundary $\partial\Omega$, and assume $\omega\subseteq\Omega$ is 
a non-empty open measurable subset with Lipschitz boundary $\partial\omega$. 
We denote with $L^2(\omega)$ the set of square-integrable 
functions over $\omega$ and with $H^1(\omega)$ the set of functions in $L^2(\omega)$ with
distributional gradient also in $L^2(\omega)$. With $H^1_0(\omega)$ we indicate the subspace of $H^1(\omega)$  of functions with vanishing trace on $\partial\omega$. 
The dual space of $H^1_0(\omega)$ is denoted with $H^{-1}(\omega)$. 
Finally, $L^{\infty}(\omega)$ is the space of essentially bounded functions.
The $L^2$-norm is indicated with $\norm{\cdot}_\omega$ and the $L^\infty$-norm with $\norm{\cdot}_{\infty;\omega}$
 where we omit the subscript $\omega$, if 
$\omega=\Omega$. Finally $\dual{\cdot}{\cdot}$ denotes the duality pairing between $H^1_0(\omega)$ and 
  $H^{-1}(\omega)$. 

\subsection{Variational formulation of the problem}
 Given $f\in H^{-1}(\Omega)$ the standard variational formulation of \eqref{rd-intro} reads
\begin{equation}
 \label{prob:rd}
\textrm{find }u\in H^1_0(\Omega)\textrm{ such that } \forall v\in H^1_0(\Omega) 
    \quad \dual{\Op(u)}{v}=\dual{f}{v},
\end{equation}
where the linear operator $\Op: H^1_0(\Omega)\to H^{-1}(\Omega)$ is given by
\begin{equation*}
\dual{\Op(u)}{v}:=\int_\Omega \nabla u\cdot\nabla v+\kappa^2\int_\Omega uv,\qquad u,v\in H^1_0(\Omega).
\end{equation*}
We denote the energy norm on $\omega\subset\Omega$ and the corresponding dual norm with
\begin{equation*}
 \tnorm{\cdot}^2_\omega:=\norm{\nabla\cdot}^2_\omega+\kappa^2\norm{\cdot}_\omega^2,\qquad 
\tnorm{\cdot}_{*;\omega}:=\sup_{\vphi\in H^1_0(\omega)}\frac{\dual{\cdot}{\vphi}}{\tnorm{\vphi}_\omega}
\end{equation*}
and again omit the subscript $\omega$, if $\omega=\Omega$.  

The bilinear form $a:H^1_0(\Omega)\times H^1_0(\Omega)\to\R$ defined by $a(u,v):=\dual{\Op(u)}{v}$ satisfies $a(u,u)=\tnorm{u}^2$ and $a(u,v)\leq \tnorm{u}\tnorm{v}$ for every $u$,$v\in H^1_0(\Omega)$. Hence, the Lax-Milgram Lemma implies that problem \eqref{prob:rd} admits a unique solution $u\in H^1_0(\Omega)$ with $\tnorm{u}=\tnorm{f}_*$.

\subsection{Galerkin approximation}
We denote by $\tri$ a  conforming simplicial mesh of $\Omega$, by $\Afaces$ the set of 
its $(d-1)$-dimensional faces, and by $\verts$ the set of its vertices.  If $\el\in\tri$ is an element and $\fa\in\Afaces$ is a 
face, we write $|\el|$ and $|\fa|$ for its $d$-dimensional Lebesgue and 
$(d-1)$-dimensional Hausdorff measure, respectively. We assume that $\mesh$ belongs to a family of shape-regular meshes and satisfies
\[
\sup_{\el\in\mesh}\frac{h_\el}{\rho_\el}\leq\ShapePar<+\infty,
\] 
where $h_\el:=\diam(\el)$ and $\rho_\el$ is the 
maximum diameter of a ball inscribed in $\el$.

We recall the definition of the conforming finite element space
\[
 \FEspace^{1,0}(\tri)
 =
 \{v\in H^1(\Omega), \;
  \forall\, \el\in\mathcal{\tri},\, v\in\mathbb{P}_1(\el) \},
\]
and set $\FEspace:= \FEspace^{1,0}(\tri)\cap H^1_0(\Omega)$, 
which consists of all continuous functions that are piecewise affine over $\tri$ and have vanishing 
trace on $\partial\Omega$. We indicate with 
\[
\Op(\FEspace):=\{g\in H^{-1}, g=\Op(\vphi), \vphi\in \FEspace\}
\]
the set of functionals obtained by applying the operator $\Op$ to a discrete function. In particular note that
\begin{equation}
\label{def-s-tilde}
\Op(\FEspace)\subset\DiscrIm:=\Span\{\Ddelta_\fa, \fa\in\faces\}\oplus \Span\{p_\el\in\Poly_1(\el), \el\in\tri\},
\end{equation}
where $\Ddelta_\fa$ is a Dirac distribution on $\fa\in\faces$, i.e. $\dual{\Ddelta_\fa}{\vphi}=\int_\fa \vphi$, 
for every $\vphi\in H^1_0(\Omega)$.

The Galerkin solution $U\in \FEspace$ to problem \eqref{prob:rd} satisfies
\begin{equation}
 \label{discr_sol}
\int_{\Omega}\nabla U\cdot\nabla V+\kappa^2\int_{\Omega}U V=\dual{f}{V}, \qquad \forall \ V\in \FEspace.
\end{equation}
Existence and uniqueness of $U$ are guaranteed by the Lax-Milgram Lemma.

We denote the set of nodes of $\FEspace$ by $\nodes$, although they coincide with $\verts$. In view of an extension to higher order, it is in fact better to understand if the nodes come into play or the vertices. 
In the same spirit, we indicate with $\{\phi_z\}_{z\in\nodes}$ 
the nodal basis, for which each $\phi_z$ is uniquely 
determined by 
\[
\phi_z\in \FEspace \quad\textrm{and}\quad \forall y\in\nodes\quad \phi_z(y)=\delta_{zy},
\]
and with $\{\lambda_z\}_{z\in\verts}$ 
the nodal basis of $S^{1,0}(\tri)$. We also recall that $\{\lambda_z\}_{z\in\verts}$ form a partition of unity, i.e.\ $\sum_{z\in\verts}\lambda_z=1$.  
Here and in the following, a subscript $\fa$, $\el$,  
$\Omega$, etc.\ to $\verts$, $\nodes$, $\Afaces$ indicates that only those vertices, nodes or 
faces that are contained in the index set are considered. With any vertex $z\in\verts$ we associate the sets
\[
\omega_z:=\supp (\lambda_z)\qquad\textrm{and}
\qquad \sigma_z:=\bigcup\{\fa\in\Afaces, \fa\ni z\},
\]
i.e.\ the star around $z$ and its skeleton. For every face $\fa\in \Afaces$, the set $\omega_{\fa} := \cup\{\el\in\tri: \partial\el\supseteq\fa \}$
is the union of elements sharing the face $\fa$.  We write simply $\el\subset\omega_\fa$ or $\el\subset\omega_z$
 but intend $\el\in\tri$ and $\el\subset\omega_\fa$ or $\el\subset\omega_z$. Moreover 
$h_\fa:=\max\{h_\el, \el\subset\omega_\fa\}$ is the bigger of the diameters
of the elements in $\omega_\fa$.

\subsection{Scaling properties} 
Here and in what follows we write $A\lesssim B$ and $A\gtrsim B$ with the meaning that there exist constants 
$C$, $c>0$, such that $A\leq C B$ and $A\geq c B$ respectively, where $C$, $c$ may depend on $\ShapePar$ and $d$,
 but are 
independent of other parameters like $h_\el$ and in particular of $\kappa$. We write also $A\approx B$ when both
$A\gtrsim B$ and $A\lesssim B$ hold.

In order to bound the norms of basis and dual basis functions, the standard procedure is to transform to the reference element $\widehat{\el}:=\mathrm{conv hull}\{{\bf{0}}, e_1,\ldots, e_d\}$, see \cite[p.\ 117-120]{Ciarlet:78}.  For the $L^\infty$-norm we obtain,
\begin{equation}
\label{scaling-inf}
\norm{\lambda_z}_{\infty;\omega_z}\leq 1,\qquad \norm{\nabla\lambda_z}_{\infty;\omega_z}\lesssim h_z^{-1},\qquad \forall\ z\in\verts,
\end{equation}
while for the $L^2$-norm we have
\begin{equation}
\label{scaling-L2}
\norm{\phi_z}_{\el}\approx |\el|^{1/2}, \qquad \norm{\nabla\phi_z}_{\el}\lesssim h_\el^{-1}|\el|^{1/2}, \qquad z\in\nodes.
\end{equation}

\subsection{Poincar\'e-Friedrichs and trace inequalities}
 For every $\vphi\in H^1(\omega)$ the Poincar\'e inequality says
\begin{equation}
 \label{poinc}
\norm{\vphi-\frac{1}{|\omega|}\int_\omega \vphi}_\omega\lesssim \diam(\omega)\norm{\nabla\vphi}_\omega.
\end{equation}
In case $\vphi$ vanishes over a part of the boundary $\Gamma\subset\partial\omega$, with $|\Gamma|\neq0$, 
the Friedrichs inequality holds
\begin{equation}
 \label{fried}
\norm{\vphi}_\omega\lesssim \diam(\omega)\norm{\nabla\vphi}_\omega.
\end{equation}
The hidden constants in \eqref{poinc}--\eqref{fried} have been explicitly bounded in \cite{Veeser.Verfuerth:12} 
in case $\omega$ is a finite element star.
In addition, given $\fa\subset\faces$ and $\el\subset\omega_\fa$, we also use the trace inequality 
\begin{equation}
\label{trace}
 \norm{\vphi}_\fa\lesssim\norm{\vphi}_\el^{1/2}\norm{\nabla\vphi}_\el^{1/2}, 
\end{equation}
where $\vphi$ vanishes on a face $\fa'$ on the boundary of $\el$.

\section{Robust error dominated oscillation}
\label{S:a_post}
\subsection{The residual and its localization}
The first step towards a posteriori error estimation is the equivalence between energy norm of the error 
and dual norm of the residual, where the  residual is defined as
\[
\Res:=f-\Op(U)\in H^{-1}(\Omega).
\]
Since we consider the energy norm, we even have the equality 
\begin{equation}
\label{err=res}
\tnorm{u-U}=\tnorm{\Res}_{*}.
\end{equation}
Moreover, as a consequence of the 
Galerkin orthogonality, the residual is orthogonal to the discrete space, i.e.\
\[
\dual{\Res}{\vphi}=0, \quad\text{ for every }\quad \vphi\in\FEspace.
\]
As proved in, e.g., \cite{Kreuzer.Veeser:16} for the $H^{-1}$-norm, thanks to this property it is possible to 
localize the dual norm of the residual on stars. For completeness, 
we provide the proof for the $\tnorm{\cdot}_*$-norm.

\begin{prop}[Localization]
\label{P:loc}
 For every $g\in H^{-1}(\Omega)$ it holds 
\begin{equation}
\label{sum<int}
\sum_{z\in\verts}\tnorm{g}_{*;\omega_z}^2\lesssim\tnorm{g}^2_{*}. 
\end{equation}
If additionally $g$ is such that 
\begin{equation}
\label{hp-ort}
 \dual{g}{\vphi}=0 \quad\text{ for every }\quad \vphi\in \FEspace,
\end{equation}
 then
\begin{equation}
\label{int<sum}
\tnorm{g}^2_{*}\lesssim \sum_{z\in\verts}\tnorm{g}_{*;\omega_z}^2.
\end{equation}
\end{prop}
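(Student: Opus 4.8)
The plan is to prove the two inequalities separately; \eqref{sum<int} holds for arbitrary $g$, whereas \eqref{int<sum} crucially uses the orthogonality \eqref{hp-ort}. For \eqref{sum<int} I would glue together local Riesz representatives. For each $z\in\verts$ the restriction of $g$ belongs to $H^{-1}(\omega_z)$, so by Riesz representation with respect to the local energy inner product $a_{\omega_z}(u,v)=\int_{\omega_z}\nabla u\cdot\nabla v+\kappa^2\int_{\omega_z}uv$ there is a unique $\vphi_z\in H^1_0(\omega_z)$ with $a_{\omega_z}(\vphi_z,v)=\dual{g}{v}$ for all $v\in H^1_0(\omega_z)$, $\tnorm{\vphi_z}_{\omega_z}=\tnorm{g}_{*;\omega_z}$, and in particular $\dual{g}{\vphi_z}=\tnorm{g}_{*;\omega_z}^2$. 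Extending each $\vphi_z$ by zero to an element of $H^1_0(\Omega)$ and setting $\vphi:=\sum_{z\in\verts}\vphi_z$, I obtain
\[
\sum_{z\in\verts}\tnorm{g}_{*;\omega_z}^2=\sum_{z\in\verts}\dual{g}{\vphi_z}=\dual{g}{\vphi}\le\tnorm{g}_*\,\tnorm{\vphi}.
\]
Since every point of $\Omega$ lies in a number of stars bounded in terms of $\ShapePar$ and $d$, a finite-overlap estimate applied separately to $\norm{\nabla\cdot}$ and to $\kappa\norm{\cdot}$ gives $\tnorm{\vphi}^2\lesssim\sum_{z\in\verts}\tnorm{\vphi_z}_{\omega_z}^2=\sum_{z\in\verts}\tnorm{g}_{*;\omega_z}^2$, and dividing through yields \eqref{sum<int}.

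For \eqref{int<sum} I would take an arbitrary $\vphi\in H^1_0(\Omega)$ and let $\interp\vphi\in\FEspace$ be a Scott--Zhang type quasi-interpolant. Using the orthogonality \eqref{hp-ort} and the partition of unity $\sum_{z\in\verts}\lambda_z=1$,
\[
\dual{g}{\vphi}=\dual{g}{\vphi-\interp\vphi}=\sum_{z\in\verts}\dual{g}{\lambda_z(\vphi-\interp\vphi)}.
\]
Each $\lambda_z(\vphi-\interp\vphi)$ lies in $H^1_0(\omega_z)$: on the interior part of $\partial\omega_z$ because $\lambda_z$ vanishes there, and on $\partial\Omega$ because both $\vphi$ and $\interp\vphi$ do. Testing locally and applying the Cauchy--Schwarz inequality twice gives
\[
\dual{g}{\vphi}\le\Big(\sum_{z\in\verts}\tnorm{g}_{*;\omega_z}^2\Big)^{1/2}\Big(\sum_{z\in\verts}\tnorm{\lambda_z(\vphi-\interp\vphi)}_{\omega_z}^2\Big)^{1/2}.
\]

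The crux is then the robust bound $\sum_{z\in\verts}\tnorm{\lambda_z(\vphi-\interp\vphi)}_{\omega_z}^2\lesssim\tnorm{\vphi}^2$. Writing $w:=\vphi-\interp\vphi$, the product rule together with $\norm{\lambda_z}_{\infty;\omega_z}\le1$ and $\norm{\nabla\lambda_z}_{\infty;\omega_z}\lesssim h_z^{-1}$ from \eqref{scaling-inf} gives $\tnorm{\lambda_z w}_{\omega_z}^2\lesssim\norm{\nabla w}_{\omega_z}^2+h_z^{-2}\norm{w}_{\omega_z}^2+\kappa^2\norm{w}_{\omega_z}^2$. Summing over $z$ by finite overlap reduces the claim to the three estimates
\[
\norm{\nabla w}^2\lesssim\norm{\nabla\vphi}^2,\qquad\sum_{z\in\verts}h_z^{-2}\norm{w}_{\omega_z}^2\lesssim\norm{\nabla\vphi}^2,\qquad\kappa^2\norm{w}^2\lesssim\kappa^2\norm{\vphi}^2,
\]
namely $H^1$-stability, local approximability and $L^2$-stability of $\interp$. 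Adding them yields $\sum_{z}\tnorm{\lambda_z w}_{\omega_z}^2\lesssim\tnorm{\vphi}^2$, and taking the supremum over $\vphi$ proves \eqref{int<sum}.

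The main obstacle is ensuring that these last estimates hold simultaneously, for one and the same quasi-interpolant, with constants independent of both the mesh-size and $\kappa$: the approximation bound $h_z^{-1}\norm{w}_{\omega_z}\lesssim\norm{\nabla\vphi}$ controls the stars where $h_z$ is small relative to $\kappa^{-1}$, while the $L^2$-stability $\norm{w}\lesssim\norm{\vphi}$ controls the reaction-dominated regime, and a single $L^2$-stable, $H^1$-stable quasi-interpolant of Scott--Zhang type delivers both. This is precisely why the orthogonality \eqref{hp-ort} is indispensable: replacing $\vphi$ by $\vphi-\interp\vphi$ converts a mere stability bound, for which $h_z^{-2}\norm{\vphi}^2$ would not be controlled by $\norm{\nabla\vphi}^2$, into the required approximation bound.
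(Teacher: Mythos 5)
Your proof is correct, and your argument for \eqref{sum<int} is essentially the paper's: you glue exact local Riesz representatives where the paper glues near-maximizers $\vphi_z$ weighted by $\dual{g}{\vphi_z}$, but both rest on the same finite-overlap estimate over stars. For \eqref{int<sum} you take a genuinely different, more machinery-heavy route. Since $\sum_{z\in\verts}(\vphi-c_z)\lambda_z=\vphi-\sum_{z\in\verts}c_z\lambda_z$, the paper's proof is exactly your argument with the specific Cl\'ement-type choice $\interp\vphi:=\sum_{z\in\verts_\Omega}c_z\lambda_z$, where $c_z=\frac{1}{|\omega_z|}\int_{\omega_z}\vphi$ for interior vertices and $c_z=0$ at boundary vertices (so that $\sum_z c_z\lambda_z\in\FEspace$, which is all that \eqref{hp-ort} is used for). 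This choice collapses your three requirements into elementary per-star facts: the $h_z^{-2}$-term is Poincar\'e \eqref{poinc}, resp.\ Friedrichs \eqref{fried} on boundary stars; no $H^1$-stability of any operator is needed because the paper never forms $\nabla\interp\vphi$ --- the product rule only produces $\lambda_z\nabla\vphi$ and $(\vphi-c_z)\nabla\lambda_z$; and, crucially for $\kappa$-robustness, $L^2$-stability holds with constant exactly $1$, $\norm{\vphi-c_z}_{\omega_z}\leq\norm{\vphi}_{\omega_z}$, because the mean value is the best constant approximation in $L^2(\omega_z)$. Your abstract route is sound, but one point deserves care: the classical face-averaged Scott--Zhang operator is \emph{not} $L^2$-stable, so ``Scott--Zhang type'' must mean the element-averaged variant (with boundary nodes of $H^1_0$-functions set to zero) or a Cl\'ement-type operator, for which $H^1$-stability, $L^2$-stability and first-order approximation indeed hold simultaneously with shape-regularity constants; also, your local approximation bound $\norm{\vphi-\interp\vphi}_{\omega_z}\lesssim h_z\norm{\nabla\vphi}_{\tilde\omega_z}$ involves enlarged patches $\tilde\omega_z$, harmless by shape regularity but an extra overlap argument the paper avoids. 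What your version buys in exchange is flexibility: it transfers verbatim to higher-order spaces $\FEspace$ and other quasi-interpolants, whereas the paper's per-star constants are maximally self-contained and make the robustness mechanism transparent.
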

\begin{proof}
In order to prove \eqref{sum<int}, take for every 
$z\in\verts$ a function $\vphi_z\in H^1_0(\omega_z)$ with $\tnorm{\vphi_z}_{\omega_z}\leq 1$  and set 
$\vphi=\sum_{z\in\verts}\dual{g}{\vphi_z}\vphi_z\in H^1_0(\Omega)$.  We have
\begin{equation*}
 \sum_{z\in\verts}\dual{g}{\vphi_z}^2=\dual{g}{\vphi}\leq \tnorm{g}_{*}\tnorm{\vphi}
\end{equation*}
and
\begin{equation*}
 \tnorm{\vphi}^2=\sum_{\el\in\tri}\tnorm{\vphi}^2_\el
\lesssim \sum_{\el\in\tri}\sum_{z\in\verts_\el}\dual{g}{\vphi_z}^2\tnorm{\vphi_z}^2_\el
\lesssim \sum_{z\in\verts}\dual{g}{\vphi_z}^2.
\end{equation*}
We obtain \eqref{sum<int} taking the suprema over all $\vphi_z$ for every $z\in\verts$.
\\
Turning to \eqref{int<sum}, we exploit that $\{\lambda_z\}_{z\in\verts}$ form a partition of unity and \eqref{hp-ort} 
to obtain, for every $\vphi\in H^1_0(\Omega)$,
\begin{equation}
\begin{aligned}
\dual{g}{\vphi}&=\sum_{z\in\verts}\dual{g}{\vphi\lambda_z}=\sum_{z\in\verts}\dual{g}{(\vphi-c_z)\lambda_z}\\
&
\leq \left(\sum_{z\in\verts}\tnorm{g}^2_{*;\omega_z}\right)^{1/2} \left(\sum_{z\in\verts}
\tnorm{(\vphi-c_z)\lambda_z}^2_{\omega_z}\right)^{1/2}.
\end{aligned}
\label{dual-part-un-ort}
\end{equation}
for every $c_z\in\R$ such that $c_z=0$ if $z\in\verts_{\partial\Omega}$. The choice 
$c_z=1/|\omega_z|\int_{\omega_z}\vphi$ if $z\in\verts_\Omega$, not only allows the
 use of a Poincar\'e inequality, but also implies that $\norm{\vphi-c_z}_{\omega_z}\leq\norm{\vphi}_{\omega_z}$.
Together with the Friedrichs inequality \eqref{fried} on the stars with $z\in\verts_{\partial\Omega}$ and the scaling properties
\eqref{scaling-inf}, we get
\begin{equation*}
\begin{aligned}
 \tnorm{(\vphi-c_z)\lambda_z}^2_{\omega_z}&\leq\kappa^2\norm{(\vphi-c_z)\lambda_z}^2_{\omega_z}
	+2\norm{\lambda_z\nabla \vphi}^2_{\omega_z}+2\norm{(\vphi-c_z)\nabla \lambda_z}^2_{\omega_z} 
\\
&\leq
\kappa^2\norm{\vphi}^2_{\omega_z}+2\norm{\nabla \vphi}^2_{\omega_z}+2\norm{\vphi-c_z}^2_{\omega_z}\norm{\nabla\lambda_z}_{\infty;\omega_z}
\\
&\lesssim \tnorm{\vphi}^2_{\omega_z}.
\end{aligned}
\end{equation*}
Inserting into \eqref{dual-part-un-ort}, dividing by $\tnorm{\vphi}$ 
and taking the supremum over $\vphi\in H^1_0(\Omega)$ gives the assertion.
\end{proof}

Applying Proposition \ref{P:loc} to the residual, we arrive at
\begin{equation}
\label{err-appx-sum-I}
\tnorm{u-U}^2\approx\sum_{z\in\verts}\tnorm{f-\Op (U)}^2_{*,\omega_z}.
\end{equation}
Moreover each quantity $\tnorm{f-\Op (U)}_{*,\omega_z}$ also yields a lower bound for the local error: 
\begin{equation}
\label{local-lower-f}
\begin{aligned}
\tnorm{f-\Op (U)}_{*;\omega_z}&=\sup_{\vphi\in H^1_0(\omega_z)}\frac{\dual{f-\Op (U)}{\vphi}}{\tnorm{\vphi}_{\omega_z}}
\\
&=\sup_{\vphi\in H^1_0(\omega_z)}\frac{\int_{\omega_z}\nabla(u-U)\cdot\nabla\vphi+\kappa^2(u-U)\vphi}{\tnorm{\vphi}_{\omega_z}}
\\
&
\leq \tnorm{u-U}_{\omega_z}. 
\end{aligned}
\end{equation}

\subsection{Computability and oscillation}
The quantities $\tnorm{f-\Op (U)}_{*;\omega_z}$ provide a two-sided bound for the error. Yet, they are not suited for a posteriori error estimation. In fact, without a priori knowledge on $f$, they cannot be bounded by testing $f$ with a finite number of test functions, see \cite[Cor.\ 5]{Kreuzer.Veeser:16}.

Hence, we insert a computable approximation $\interp f$ to $f$ from a finite dimensional space and write
\begin{equation}
\label{ins-op}
\tnorm{f-\Op (U)}_{*,\omega_z}\leq \tnorm{\interp f-\Op (U)}_{*,\omega_z}+\tnorm{f-\interp f}_{*,\omega_z}.
\end{equation}
It would be desirable that the right-hand side in \eqref{ins-op} is also a lower bound for the left-hand side. Necessary and sufficient conditions on the operator $\interp$ for this to hold have been derived by Kreuzer and Veeser \cite{Kreuzer.Veeser:16}. We recall the proof for completeness.

\begin{prop}
\label{P:nec-suf-cond}
Assume $f\in H^{-1}(\Omega)$, $U\in S$ and $z\in\verts$. Then
\begin{equation}
\label{inv-ineq}
\tnorm{f-\Op (U)}_{*,\omega_z}\gtrsim \tnorm{\interp f-\Op (U)}_{*,\omega_z}+\tnorm{f-\interp f}_{*,\omega_z}
\end{equation}
if and only if $\interp$ satisfies
\begin{subequations}
\label{prop-op}
\begin{align}
\label{inva-op} \interp \Op (V)&=\Op (V) &&\!\forall \ V\in S\phantom{g\in H^{-1}(\Omega)} &&\textrm{(invariance)}\\
\label{stab-op} \tnorm{\interp g}_{*,\omega_z}&\lesssim\tnorm{g}_{*,\omega_z} &&\forall \ g\in H^{-1}(\Omega)\phantom{V\in S} &&\textrm{(local stability).}
\end{align}
\end{subequations}
\end{prop}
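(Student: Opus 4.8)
The plan is to prove the two implications separately, noting that the reverse of \eqref{inv-ineq} is just the triangle inequality \eqref{ins-op} and so needs no further argument; only the displayed direction is at stake in the ``if'' part. Throughout I use that $\interp$ is a \emph{linear} operator, as the notation and \eqref{inva-op} presuppose.

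For the \emph{sufficiency} direction I assume \eqref{inva-op}--\eqref{stab-op}. The key observation is that, since $U\in\FEspace$, invariance gives $\interp\Op(U)=\Op(U)$, so linearity yields the identity
\[
\interp f-\Op(U)=\interp f-\interp\Op(U)=\interp\bigl(f-\Op(U)\bigr).
\]
I would bound the first summand on the right of \eqref{inv-ineq} directly by local stability, obtaining $\tnorm{\interp(f-\Op(U))}_{*,\omega_z}\lesssim\tnorm{f-\Op(U)}_{*,\omega_z}$. For the oscillation term I would write $f-\interp f=(f-\Op(U))-\interp(f-\Op(U))$ and combine the triangle inequality with local stability once more, so that $\tnorm{f-\interp f}_{*,\omega_z}\lesssim\tnorm{f-\Op(U)}_{*,\omega_z}$ as well. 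Summing the two estimates produces \eqref{inv-ineq}.

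For the \emph{necessity} direction I assume \eqref{inv-ineq} and extract each condition by a clever choice of data. Local stability follows from taking $U=0\in\FEspace$: then \eqref{inv-ineq} reads $\tnorm{f}_{*,\omega_z}\gtrsim\tnorm{\interp f}_{*,\omega_z}+\tnorm{f-\interp f}_{*,\omega_z}\geq\tnorm{\interp f}_{*,\omega_z}$, which is precisely \eqref{stab-op} with $g=f$ arbitrary in $H^{-1}(\Omega)$. Invariance follows from taking $f=\Op(V)$ and $U=V$ for $V\in\FEspace$: the left-hand side of \eqref{inv-ineq} then vanishes, so both nonnegative summands on the right must vanish, giving $\tnorm{\Op(V)-\interp\Op(V)}_{*,\omega_z}=0$.

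The step requiring care is upgrading this last local identity to the functional equality \eqref{inva-op} in $H^{-1}(\Omega)$: vanishing of $\tnorm{\cdot}_{*,\omega_z}$ only says that $w:=\Op(V)-\interp\Op(V)$ annihilates $H^1_0(\omega_z)$. Requiring \eqref{inv-ineq} at every vertex $z\in\verts$, I would invoke the partition of unity $\{\lambda_z\}$: any $\vphi\in H^1_0(\Omega)$ decomposes as $\vphi=\sum_{z\in\verts}\vphi\lambda_z$ with each $\vphi\lambda_z\in H^1_0(\omega_z)$, whence $\dual{w}{\vphi}=\sum_{z\in\verts}\dual{w}{\vphi\lambda_z}=0$ and therefore $w=0$. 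This covering argument, together with the implicit linearity of $\interp$ used in the sufficiency part, is the only genuinely delicate point; the remaining work reduces to the two algebraic identities and the triangle inequality.
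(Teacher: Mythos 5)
Your proof is correct and takes essentially the same route as the paper: necessity via the choices $f=\Op(U)$ (in your notation $f=\Op(V)$, $U=V$) and $U=0$, and sufficiency via the identity $\interp f-\Op(U)=\interp\bigl(f-\Op(U)\bigr)$ combined with local stability and the triangle inequality. The only difference is that you spell out a step the paper's terse proof leaves implicit --- upgrading $\tnorm{\interp\Op(V)-\Op(V)}_{*;\omega_z}=0$ at every vertex to the functional identity \eqref{inva-op} in $H^{-1}(\Omega)$ via the partition of unity $\{\lambda_z\}_{z\in\verts}$ --- which is a legitimate clarification rather than a different argument.
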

\begin{proof}
Assume first \eqref{inv-ineq}. Choosing respectively $f=\Op (U)$ and $U=0$ gives  \eqref{inva-op} and \eqref{stab-op}. 
\\
On the other hand, assume the validity of \eqref{prop-op}. Then, the triangle inequality yields
\[
\tnorm{f-\interp f}_{*,\omega_z}\leq \tnorm{f-\Op (U)}_{*,\omega_z}+\tnorm{\interp f-\Op (U)}_{*,\omega_z}
\]
and 
\[
\tnorm{\interp f-\Op (U)}_{*,\omega_z}=\tnorm{\interp (f-\Op (U))}_{*,\omega_z}\lesssim \tnorm{f-\Op (U)}_{*,\omega_z},
\]
which prove \eqref{inv-ineq}.
\end{proof}

\subsection{Abstract construction of the interpolation operator}\label{abstract-construction}

Still following the lines in \cite{Kreuzer.Veeser:16} we first construct abstractly an operator satisfying \eqref{prop-op} via a locally stable bi-orthogonal system. To this end we first set $\phi_{z;\el}:=\phi_z\chi_\el$ for all $\el \in \tri$ and $z \in \nodes_\el$ and recall that $\{ \phi_{z;\el}, \Ddelta_\fa \}_{\el\in\tri, z\in\nodes_\el,\fa\in\faces}$ forms a basis of $\DiscrIm$. Moreover assume $\{\phi^*_{z;\el}, \phi^*_\fa \}_{\el\in\tri, z\in\nodes_\el,\fa\in\faces} \subset H^1_0(\Omega)$ and define $\Pi : H^{-1}(\Omega) \to \DiscrIm$ for every $g \in H^{-1}(\Omega)$ by
\begin{equation}\label{abstract-op}
\Pi g := \sum_{\el \in \tri} \sum_{z \in \nodes_\el} \langle g, \phi^*_{z;\el} \rangle \phi_{z;\el} + \sum_{\fa \in \faces} \langle g, \phi^*_\fa \rangle \Ddelta_\fa.
\end{equation}
We first give conditions on $\{\phi^*_{z;\el}, \phi^*_\fa \}$ so that $\Pi$ satisfies \eqref{prop-op} and in the next section give a suitable example of such functions.

\begin{lem}
\label{L:inv-stab-abstract}
If $\{ \phi_{z;\el}, \Ddelta_\fa \}$ and $\{\phi^*_{z;\el}, \phi^*_\fa \}$, $\el\in\tri$,  $z\in\nodes_\el$, $\fa\in\faces$ form a bi-orthogonal system, that is
\begin{subequations}
\label{bi-orth-inv}
\begin{align}
\label{bi-orth-inv-a} \langle \phi_{z_1;\el_1}, \phi^*_{z_2;\el_2} \rangle &= \delta_{z_1z_2} \delta_{\el_1\el_2} &&\forall \el_j \in \tri, z_j \in \nodes_{\el_j}, j \in \{1, 2\} \\
\label{bi-orth-inv-b} \langle \Ddelta_{\fa_1}, \phi^*_{\fa_2} \rangle &= \delta_{\fa_1\fa_2} &&\forall \fa_1, \fa_2 \in \faces \\
\label{bi-orth-inv-c} \langle \phi_{z;\el}, \phi^*_\fa \rangle &= \langle \Ddelta_\fa, \phi^*_{z;\el} \rangle = 0 &&\forall \el \in \tri, z \in \nodes_\el, \fa \in \faces
\end{align}
\end{subequations}
then $\Pi$ defined in \eqref{abstract-op} is invariant over $\DiscrIm$ satisfying \eqref{inva-op}.
\\
Moreover, if for every $y \in \verts$
\begin{subequations}\label{bi-orth-stab}
\begin{align}
\label{bi-orth-stab-a} \tnorm{\phi_{z;\el}}_{*;\omega_y} \tnorm{\phi^*_{z;\el}}_{\omega_y} &\lesssim 1 &&\forall \el \subset \omega_y \\
\label{bi-orth-stab-b} \tnorm{\Ddelta_\fa}_{*;\omega_y} \tnorm{\phi^*_\fa}_{\omega_y} &\lesssim 1 &&\forall \fa \subset \Afaces_{\omega_y}
\end{align}
\end{subequations}
then $\Pi$ is locally stable in the $\tnorm{\cdot}_{*}$-norm satisfying \eqref{stab-op}.
\end{lem}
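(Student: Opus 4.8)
The plan is to treat the two assertions separately: the invariance by a direct algebraic computation and the stability by a duality argument combined with the finite overlap of stars. For the invariance I would first observe that it suffices to show that $\interp$ acts as the identity on all of $\DiscrIm$, since $\Op(V)\in\DiscrIm$ for every $V\in\FEspace$ by \eqref{def-s-tilde}, whence \eqref{inva-op} follows at once. Using that $\{\phi_{z;\el},\Ddelta_\fa\}$ is a basis of $\DiscrIm$, I would expand an arbitrary $w\in\DiscrIm$ as $w=\sum_{\el}\sum_{z\in\nodes_\el}a_{z;\el}\phi_{z;\el}+\sum_{\fa\in\faces}b_\fa\Ddelta_\fa$ and compute the coefficients produced by $\interp$. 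Testing $w$ with $\phi^*_{z';\el'}$ and invoking \eqref{bi-orth-inv-a} together with the second identity in \eqref{bi-orth-inv-c} yields $\langle w,\phi^*_{z';\el'}\rangle=a_{z';\el'}$, while testing with $\phi^*_{\fa'}$ and invoking \eqref{bi-orth-inv-b} together with the first identity in \eqref{bi-orth-inv-c} yields $\langle w,\phi^*_{\fa'}\rangle=b_{\fa'}$. Substituting these back into \eqref{abstract-op} recovers $\interp w=w$, which is the claimed invariance.

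For the local stability I would fix $y\in\verts$ and a test function $\vphi\in H^1_0(\omega_y)$. The first step is a localization of the defining sum \eqref{abstract-op}: since $\phi_{z;\el}$ is supported in $\el$ and $\langle\Ddelta_\fa,\vphi\rangle=\int_\fa\vphi$ vanishes unless $\fa$ lies in the interior of the star (on $\partial\omega_y$ the trace of $\vphi$ is zero), only the indices with $\el\subset\omega_y$ and with $\fa\in\faces$ interior to $\omega_y$ survive in $\langle\interp g,\vphi\rangle$. For each surviving term I would split the pairing into the product of a coefficient and a basis contribution, bounding the coefficients by $|\langle g,\phi^*_{z;\el}\rangle|\le\tnorm{g}_{*;\omega_y}\tnorm{\phi^*_{z;\el}}_{\omega_y}$ and $|\langle g,\phi^*_\fa\rangle|\le\tnorm{g}_{*;\omega_y}\tnorm{\phi^*_\fa}_{\omega_y}$, and the basis contributions by $|\langle\phi_{z;\el},\vphi\rangle|\le\tnorm{\phi_{z;\el}}_{*;\omega_y}\tnorm{\vphi}_{\omega_y}$ and $|\langle\Ddelta_\fa,\vphi\rangle|\le\tnorm{\Ddelta_\fa}_{*;\omega_y}\tnorm{\vphi}_{\omega_y}$, all of which are instances of the definition of the local dual norm.

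The stability hypotheses \eqref{bi-orth-stab-a} and \eqref{bi-orth-stab-b} then turn each of the resulting products $\tnorm{\phi^*_{z;\el}}_{\omega_y}\tnorm{\phi_{z;\el}}_{*;\omega_y}$ and $\tnorm{\phi^*_\fa}_{\omega_y}\tnorm{\Ddelta_\fa}_{*;\omega_y}$ into a quantity $\lesssim 1$. Since shape-regularity bounds the number of elements, nodes and interior faces of a star by a constant depending only on $\ShapePar$ and $d$, summing the boundedly many surviving terms yields $|\langle\interp g,\vphi\rangle|\lesssim\tnorm{g}_{*;\omega_y}\tnorm{\vphi}_{\omega_y}$; dividing by $\tnorm{\vphi}_{\omega_y}$ and taking the supremum over $\vphi\in H^1_0(\omega_y)$ gives \eqref{stab-op}.

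The delicate point, and the one I would emphasize, is the locality underlying the coefficient bound: the estimate $|\langle g,\phi^*_{z;\el}\rangle|\le\tnorm{g}_{*;\omega_y}\tnorm{\phi^*_{z;\el}}_{\omega_y}$ is legitimate only when $\phi^*_{z;\el}\in H^1_0(\omega_y)$, and likewise for the $\phi^*_\fa$. This is precisely what the very appearance of the norms $\tnorm{\phi^*_{z;\el}}_{\omega_y}$ and $\tnorm{\phi^*_\fa}_{\omega_y}$ in \eqref{bi-orth-stab} presupposes, so the hypothesis must be read as carrying the support condition that each $\phi^*_{z;\el}$ with $\el\subset\omega_y$ and each interior $\phi^*_\fa$ with $\fa\subset\omega_y$ belongs to $H^1_0(\omega_y)$. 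Everything else is routine once this locality is secured, and the genuine work is deferred to the next section, where a concrete bi-orthogonal system with exactly these support and scaling properties is exhibited.
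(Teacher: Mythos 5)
Your proposal is correct and follows essentially the same route as the paper: invariance by expanding an element of $\DiscrIm$ in the basis $\{\phi_{z;\el},\Ddelta_\fa\}$ and reading off the coefficients via \eqref{bi-orth-inv}, and local stability by bounding each pairing $\langle g,\phi^*_{z;\el}\rangle\langle\phi_{z;\el},\vphi\rangle$ and $\langle g,\phi^*_\fa\rangle\langle\Ddelta_\fa,\vphi\rangle$ through the local dual norms, invoking \eqref{bi-orth-stab} and the shape-regularity bound on the number of terms per star before dividing by $\tnorm{\vphi}_{\omega_y}$ and taking the supremum. Your closing observation that \eqref{bi-orth-stab} tacitly carries the support condition $\phi^*_{z;\el},\phi^*_\fa\in H^1_0(\omega_y)$ is a fair reading of a point the paper leaves implicit, and it is indeed satisfied by the concrete construction of the following section.
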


\begin{proof}
Invariance \eqref{inva-op} follows readily recalling that $\{ \phi_{z;\el}, \Ddelta_\fa \}_{\el\in\tri, z\in\nodes_\el,\fa\in\faces}$ is a basis of $\DiscrIm$ and applying \eqref{bi-orth-inv}.
\\
Concerning \eqref{stab-op} for every $\varphi \in H^1_0(\omega_y)$ we use \eqref{bi-orth-stab} to get
\begin{equation*}
\begin{split}
\langle \Pi g, \varphi \rangle &=  \sum_{\el \subset \omega_y} \sum_{z \in \nodes_\el} \langle g, \phi^*_{z;\el} \rangle \langle \phi_{z;\el}, \varphi \rangle + \sum_{\fa \in \Afaces_{\omega_y}} \langle g, \phi^*_\fa \rangle \langle \Ddelta_\fa, \varphi \rangle \\
&\le \sum_{\el \subset \omega_y} \sum_{z \in \nodes_\el} \tnorm{g}_{*;\omega_y} \tnorm{\phi^*_{z;\el}}_{\omega_y} \tnorm{\phi_{z;\el}}_{*;\omega_y} \tnorm{\varphi}_{\omega_y} \\
&\quad+ \sum_{\fa \in \Afaces_{\omega_y}} \tnorm{g}_{*;\omega_y} \tnorm{\phi^*_\fa}_{\omega_y} \tnorm{\Ddelta_\fa}_{*;\omega_y} \tnorm{\varphi}_{\omega_y} \\
&\lesssim \tnorm{g}_{*;\omega_y} \tnorm{\varphi}_{\omega_y}.
\end{split}
\end{equation*}
Dividing by $\tnorm{\varphi}_{\omega_y}$ and taking the supremum over $\varphi \in H^1_0(\omega_y)$ we get \eqref{stab-op}.
\end{proof}

In \cite{Kreuzer.Veeser:16} two possible bi-orthogonal systems for the construction of $\Pi$ have been given for the case $\Op:=-\Delta$ and the $\tnorm{\cdot}_*$-norm replaced by the $H^{-1}$-norm. Since they are not robustly stable in the $\tnorm{\cdot}_*$-norm, they are not suited for our purposes. We therefore modify the construction of one of them with the help of the squeezed bubble functions of Verf\"urth \cite{Verfuerth:98}.

\subsection{Bubble functions}

\label{SS:bubble}
We start with the construction of $\{ \phi^*_{z;\el} \}_{\el\in\tri, z\in\nodes_\el}$ and recollect from \cite[\S 3.2.3, \S 3.6]{Verfuerth:13} the definition of the element bubble functions, together with their properties. For every $\el\in\tri$ we set
\begin{equation*}
\bub_\el:=\prod_{z\in\verts_\el}\lambda_z
\end{equation*}
and recall that, for 
every $v\in\Poly_1(\el)$, there holds
\begin{subequations}
 \begin{align}
\label{eb-std-L2}
 \norm{v}_\el&\lesssim \norm{\bub_\el^{1/2}v}_\el, \qquad\textrm{and}\\
\label{eb-std-H1}
\norm{\nabla(\bub_\el v)}_\el&\lesssim h_\el^{-1}\norm{v}_\el.
\end{align}
  \end{subequations}
Note that $\bub_\el\in H^1_0(\el)$, and that in \cite{Kreuzer.Veeser:16} scaled variants of  $\bub_\el$, $\el \in \tri$ are part of the dual basis of the bi-orthogonal system upon which the operator in \S \ref{aposterr} is defined. Yet, this leads to an operator that is invariant only onto piecewise constant functions. In view of \eqref{bi-orth-inv-a} and $\phi_{z;\el} \in \Poly_1(\el)$ we must employ other bubble functions in order to obtain orthogonality and therefore invariance onto polynomials of degree $1$.

 To this end, for every $\el\in\tri$ and $z\in\nodes_\el$,
 we denote with $\phi_{z;\el}:=\phi_z\chi_\el$ the restriction of $\phi_z$ to $\el$. Moreover let $\{\psi_z^\el\}_{z\in\nodes_\el}$ be the local dual basis to $\{\phi_{z;\el}\}_{z\in\nodes_\el}$ with respect to the $L^2$-scalar product with weight $\bub_\el$, that is, 
\begin{equation*}
\psi_z^\el\in \Poly_1(\el) \text{ and for every }y\in\nodes_\el, \quad \int_\el \bub_\el \psi_z^\el\phi_{y;\el}=\delta_{yz}.
\end{equation*}
For every $z\in\nodes_\el$ we then set  
\begin{equation*}
\phi^*_{z;\el}:= \psi_z^\el\bub_\el,
\end{equation*}
so that $\phi^*_{z;\el}\in H^1_0(\el)\cap \Poly_{2+d}(\el)$ and for every $y\in\nodes$ we have the orthogonality property
\begin{equation}
\label{dual-basis}
\int_\el \phi^*_{z;\el}\phi_y=\delta_{zy}.
\end{equation}
Concerning the scaling of norms of $\phi^*_{z;\el}$, we observe that on the reference element, the dual basis of $\{\widehat{\phi}_{\widehat{z}}\}_{\widehat{z}\in\nodes_{\widehat{\el}}}:=\{\phi_z\circ G_\el\}_{z\in\nodes_\el}$ is given by $\widehat{\psi}_{\widehat{z}}=\det(\mathbf{G}_\el)\cdot\psi_z^\el\circ G_\el$, where $\mathbf{G}_\el$ is the non singular matrix associated with one of the affine transformations $G_\el:\R^d\to\R^d$ such that $G_\el(\widehat{\el})=\el$. Exploiting also an inverse inequality, we arrive at 
\begin{equation*}
\norm{\nabla \phi^*_{z;\el}}_\el\lesssim h_\el^{-1}\norm{\phi^*_{z;\el}}_\el\leq h_{\el}^{-1}\norm{\psi^\el_z}_{\el}\lesssim h_{\el}^{-1}|\el|^{-1/2},
\end{equation*}
and 
\begin{equation}
\label{scaling-dual-k}
\tnorm{\phi^*_{z;\el}}_\el\lesssim |\el|^{-1/2}\max\{h_\el^{-1},\kappa\}.
\end{equation}

For the construction of the functions $\phi^*_\fa$ associated with the interior faces we use the squeezed bubble functions 
introduced in \cite{Verfuerth:98}.  Given a parameter $\vartheta\in(0,1]$, we consider the transformation $\Phi_\vartheta: \R^d\to\R^d$ that maps 
$(x_1, \ldots, x_d)$ to $(x_1,\ldots, x_{d-1}, \vartheta x_d)$. Given a face $\fa$ and an element $\el$ with 
$\fa\subset\partial\el$ we denote by $\transf_{\el,\fa}:\R^d\to\R^d$ an orientation-preserving
 affine transformation which maps the reference element $\widehat{\el}$ to the element $\el$ and the face 
$\widehat{\fa}:=\{x\in \widehat{\el}, x_d=0 \}$ to $\fa$. We define the `squeezed' element $\el_\vartheta$ by
\[
\el_\vartheta:=\transf_{\el,\fa}\circ\Phi_\vartheta\circ \transf_{\el,\fa}^{-1}(\el)
\] 
and set $\omega_\fa^\vartheta:=\cup_{\el\subset\omega_\fa}\el_\vartheta$. Moreover we
denote by $\lambda_{z,\vartheta}$, $z\in\verts_{\fa}$, the piecewise affine and continuous functions such that $\lambda_{z,\vartheta}(y)=\delta_{yz}$ for every $y\in\verts_{\omega^\vartheta_\fa}$, and extend them by zero outside $\omega^\vartheta_\fa$.
We set 
\begin{equation}
\label{theta}
\vartheta:=\min\{1,h_\el^{-1}\kappa^{-1}\}
\end{equation}
and define the `squeezed' bubble function $\bub_\fa$ as
\begin{equation}
\label{def-face-bubble-low-b}
 \bub_{\fa}:=\prod_{z\in\verts_\fa}\lambda_{z,\vartheta}.
\end{equation}
There holds $\bub_{\fa}\in H^1_0(\omega_\fa)$ and, for every $v\in\Poly_\ell(\fa)$, see \cite[\S 3.6]{Verfuerth:13}
\begin{subequations}
\begin{equation}
\label{fb-std-L2}
 \norm{v}_\fa\lesssim \norm{\bub_\fa^{1/2}v}_\fa, \qquad\textrm{and}
\end{equation}
\begin{equation}
\label{fb-std-H1} \norm{\bub_\fa v}_\el\lesssim \vartheta^{1/2}h_\fa^{1/2}\norm{v}_\fa,\qquad \norm{\nabla(\bub_\fa v)}_\el\lesssim \vartheta^{-1/2}h_\fa^{-1/2}\norm{v}_\fa.
\end{equation}
  \end{subequations}
The scaled bubble function
\begin{equation}
\label{def-face-bubble}
 \psi_{\fa}:=\left(\int_\fa \bub_\fa\right)^{-1}\bub_\fa
\end{equation}
thus satisfies 
\begin{subequations}
\begin{equation}
\label{fb:supp-psi}
\psi_{\fa}\in H^1_0(\omega_\fa),\qquad \dual{\Ddelta_\fa}{\psi_{\fa}}=1,\qquad \text{ and }
\end{equation}
\begin{equation}
\label{fb:stab}
\tnorm{\psi_{\fa}}_{\omega_\fa}\lesssim |\fa|^{-1/2}\max\{h_\fa^{-1}, \kappa\}^{1/2}.
\end{equation}
\end{subequations}

\subsection{An $\Op(S)$-invariant and locally stable interpolation operator} 
In this section we construct an interpolation operator that satisfies \eqref{prop-op} with the help of the bubble functions of \S \ref{SS:bubble}. We define $\interp:H^{-1}(\Omega)\to \DiscrIm$ as in \eqref{abstract-op} with
\begin{equation*}
\begin{aligned}
\phi^*_{z;\el} &:= \psi^\el_z \bub_\el \in H^1_0(\el) &&\forall \el \in \tri, z \in \nodes_\el, \\
\phi^*_\fa &:= \psi_\fa - \sum_{\el \subset \omega_\fa} \sum_{z \in \nodes_\el} \phi^*_{z;\el}  \langle \phi_{z;\el}, \psi_\fa \rangle \in H^1_0(\omega_\fa) &&\forall \fa \in \faces.
\end{aligned}
\end{equation*}
In order to be able to apply the operator $\interp$ to $f$, we require that quantities of the form
$\dual{f}{\Phi}$ are known exactly, where $\Phi\in H^1_0(\Omega)$ is a product of polynomials $p_\el\in\Poly_1(\el)$ with basis functions $\lambda_z$ or $\lambda_{z,\vartheta}$, $z\in\verts_\Omega$. Note that this is slightly more demanding than requiring that the terms $\dual{f}{\phi_z}$ are available, and this information is needed in order to build the linear system that has to be solved  to find the discrete solution $U$. 

The following lemma proves that $\interp$ satisfies \eqref{prop-op}.

\begin{lem}\label{L:inv-stab-concret}
The functions $\{ \phi^*_{z;\el}, \phi^*_\fa \}_{\el \in \tri, z \in \nodes_\el, \fa \in \faces}$ satisfy the assumptions \eqref{bi-orth-inv} and \eqref{bi-orth-stab} of Lemma \ref{L:inv-stab-abstract}. Therefore the operator $\Pi$ of \eqref{abstract-op} satisfies \eqref{prop-op}.
\end{lem}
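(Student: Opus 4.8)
The plan is to verify the six hypotheses of Lemma \ref{L:inv-stab-abstract} --- the bi-orthogonality relations \eqref{bi-orth-inv-a}--\eqref{bi-orth-inv-c} and the scaling bounds \eqref{bi-orth-stab-a}--\eqref{bi-orth-stab-b} --- and then read off \eqref{prop-op}. Two elementary structural facts drive everything: each $\phi^*_{z;\el}=\psi^\el_z\bub_\el$ lies in $H^1_0(\el)$, and each $\psi_\fa$ lies in $H^1_0(\omega_\fa)$, whose only interior face is $\fa$. Relation \eqref{bi-orth-inv-a} is then immediate, since both functions are supported in one element and \eqref{dual-basis} gives $\int_\el\phi^*_{z_2;\el}\phi_{z_1}=\delta_{z_1z_2}$ on the diagonal. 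Because $\phi^*_{z;\el}\in H^1_0(\el)$ has vanishing trace on every face, $\langle\Ddelta_\fa,\phi^*_{z;\el}\rangle=\int_\fa\phi^*_{z;\el}=0$, giving the second identity of \eqref{bi-orth-inv-c}; the same fact removes the entire correction sum from any Dirac pairing, so that $\langle\Ddelta_{\fa_1},\phi^*_{\fa_2}\rangle=\int_{\fa_1}\psi_{\fa_2}$, which equals $\delta_{\fa_1\fa_2}$ because $\psi_{\fa_2}$ vanishes on $\partial\omega_{\fa_2}$ and is normalized by \eqref{fb:supp-psi} to integrate to $1$ over $\fa_2$. This proves \eqref{bi-orth-inv-b}.

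The first identity of \eqref{bi-orth-inv-c} is where the correction term earns its keep. Expanding $\phi^*_\fa$ and invoking the already-proved \eqref{bi-orth-inv-a}, the double sum $\sum_{\el'\subset\omega_\fa}\sum_{z'}\langle\phi_{z;\el},\phi^*_{z';\el'}\rangle\langle\phi_{z';\el'},\psi_\fa\rangle$ collapses, for $\el\subset\omega_\fa$, to the single surviving term $\langle\phi_{z;\el},\psi_\fa\rangle$, which cancels the leading term exactly; for $\el\not\subset\omega_\fa$ all contributions vanish by disjoint supports. Hence $\langle\phi_{z;\el},\phi^*_\fa\rangle=0$, and the bi-orthogonal system \eqref{bi-orth-inv} is complete.

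For the scaling bounds I would estimate the two dual norms by testing against $\vphi\in H^1_0(\omega_y)$ and exploiting $\norm{\vphi}_{\omega_y}\lesssim\min\{\diam(\omega_y),\kappa^{-1}\}\tnorm{\vphi}_{\omega_y}$, which follows by combining the Friedrichs inequality \eqref{fried} on the star with $\kappa\norm{\vphi}_{\omega_y}\le\tnorm{\vphi}_{\omega_y}$; for the Dirac I would additionally route through the trace inequality \eqref{trace}. With \eqref{scaling-L2} and shape-regularity this yields $\tnorm{\phi_{z;\el}}_{*;\omega_y}\lesssim|\el|^{1/2}\max\{h_\el^{-1},\kappa\}^{-1}$ and $\tnorm{\Ddelta_\fa}_{*;\omega_y}\lesssim|\fa|^{1/2}\max\{h_\fa^{-1},\kappa\}^{-1/2}$. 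Pairing the first with \eqref{scaling-dual-k} telescopes to a $\kappa$-independent constant and gives \eqref{bi-orth-stab-a}; pairing the second with \eqref{fb:stab} controls the $\psi_\fa$-part of \eqref{bi-orth-stab-b}.

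The one genuinely delicate point, which I expect to be the crux, is the correction sum inside $\tnorm{\phi^*_\fa}_{\omega_y}$. After the triangle inequality this reduces to bounding $\tnorm{\phi^*_{z;\el}}_\el\,|\langle\phi_{z;\el},\psi_\fa\rangle|$, and the coefficient $\langle\phi_{z;\el},\psi_\fa\rangle=\int_\el\phi_z\psi_\fa$ forces control of the plain $L^2$-norm $\norm{\psi_\fa}_\el$ of the squeezed bubble on a single element --- a quantity not supplied directly by \eqref{fb:stab}. Using \eqref{fb-std-H1} with $v\equiv1$ and $\int_\fa\bub_\fa\approx|\fa|$ one obtains $\norm{\psi_\fa}_\el\lesssim|\fa|^{-1/2}(\vartheta h_\fa)^{1/2}$, and here the precise choice $\vartheta=\min\{1,h_\el^{-1}\kappa^{-1}\}$ of \eqref{theta} is decisive: it makes $\vartheta h_\fa\approx\max\{h_\fa^{-1},\kappa\}^{-1}$, so that after multiplying by \eqref{scaling-dual-k} and summing over the $O(d)$ nodes in the patch the correction scales exactly like $\tnorm{\psi_\fa}_{\omega_\fa}$ and does not degrade \eqref{bi-orth-stab-b}. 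It is this tuning of the squeezing parameter to the reaction-diffusion norm that makes the whole estimate robust in $\kappa$.
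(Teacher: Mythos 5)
Your proof is correct, and for most of the lemma it coincides with the paper's: the bi-orthogonality relations \eqref{bi-orth-inv} via the support facts $\phi^*_{z;\el}\in H^1_0(\el)$, $\psi_\fa\in H^1_0(\omega_\fa)$, \eqref{dual-basis}, \eqref{fb:supp-psi} and the collapse of the correction sum; the bound $\tnorm{\phi_{z;\el}}_{*;\omega_y}\lesssim|\el|^{1/2}\min\{h_\el,\kappa^{-1}\}$ via Friedrichs combined with $\kappa\norm{\vphi}\le\tnorm{\vphi}$ (the paper's \eqref{intKphiz_vphi}); and the Dirac bound $\tnorm{\Ddelta_\fa}_{*;\omega_y}\lesssim|\fa|^{1/2}\min\{h_\fa,\kappa^{-1}\}^{1/2}$ via trace plus Young (the paper's \eqref{intFvphi}). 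Where you genuinely diverge is the step you correctly identify as the crux, the correction sum in $\tnorm{\phi^*_\fa}_{\omega_y}$. The paper avoids any direct estimate of the squeezed bubble's $L^2$-norm: it bounds the coefficient by duality, $\abs{\dual{\phi_{z;\el}}{\psi_\fa}}\le\tnorm{\phi_{z;\el}}_{*;\omega_\fa}\tnorm{\psi_\fa}_{\omega_\fa}$, so that $\tnorm{\phi^*_\fa}_{\omega_\fa}\lesssim\tnorm{\psi_\fa}_{\omega_\fa}\bigl(1+\sum_{\el\subset\omega_\fa}\sum_{z\in\nodes_\el}\tnorm{\phi^*_{z;\el}}_{\omega_\fa}\tnorm{\phi_{z;\el}}_{*;\omega_\fa}\bigr)\lesssim\tnorm{\psi_\fa}_{\omega_\fa}$, recycling the already-proven \eqref{bi-orth-stab-a} and then invoking \eqref{fb:stab} once; all dependence on $\vartheta$ stays hidden inside \eqref{fb:stab}. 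You instead estimate the coefficient explicitly, via $\norm{\psi_\fa}_\el\lesssim|\fa|^{-1/2}(\vartheta h_\fa)^{1/2}$ from \eqref{fb-std-H1} with $v\equiv1$ and $\int_\fa\bub_\fa\approx|\fa|$ (valid since the squeezing is normal to $\fa$, so $\bub_\fa|_\fa$ is the standard face bubble), and observe that the choice \eqref{theta} gives $\vartheta h_\fa\approx\min\{h_\fa,\kappa^{-1}\}=\max\{h_\fa^{-1},\kappa\}^{-1}$; this is also correct and has the pedagogical merit of making explicit exactly how the tuning of $\vartheta$ produces robustness, at the cost of extra scaling computations that the paper's bootstrap renders unnecessary. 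One bookkeeping point: in the final multiplication you need Cauchy--Schwarz with $\norm{\phi_{z;\el}}_\el\approx|\el|^{1/2}$ from \eqref{scaling-L2}, i.e.\ $\abs{\dual{\phi_{z;\el}}{\psi_\fa}}\le\norm{\phi_{z;\el}}_\el\norm{\psi_\fa}_\el$, before multiplying by \eqref{scaling-dual-k}; only then do the factors $|\el|^{\pm1/2}$ cancel and the correction term scales as $|\fa|^{-1/2}\max\{h_\fa^{-1},\kappa\}^{1/2}$, matching \eqref{fb:stab} as you claim.
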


\begin{proof}
Conditions \eqref{bi-orth-inv-a} and \eqref{bi-orth-inv-b} follow readily from \eqref{dual-basis}, \eqref{fb:supp-psi} and $\phi^*_{z;\el} \in H^1_0(\el)$.
\\
Moreover, for every $\widetilde \el \subset \omega_\fa$ and $y \in \nodes_{\widetilde \el}$ we get from \eqref{bi-orth-inv-a}
\begin{equation*}
\begin{split}
\langle \phi_{y;\widetilde \el}, \phi^*_\fa \rangle &= \langle \phi_{y;\widetilde \el}, \psi_\fa \rangle - \sum_{\el \subset \omega_\fa} \sum_{z \in \nodes\el} \langle \phi_{y;\widetilde \el}, \phi^*_{z;\el} \rangle \langle \phi_{z;\el}, \psi_\fa \rangle \\
&= \langle \phi_{y;\widetilde \el}, \psi_\fa \rangle - \langle \phi_{y;\widetilde \el}, \psi_\fa \rangle \\
&= 0.
\end{split}
\end{equation*}
Since $\phi^*_\fa \in H^1_0(\omega_\fa)$ and $\phi^*_{z;\el} \in H^1_0(\el)$ this proves \eqref{bi-orth-inv-c}.
\\
To prove \eqref{bi-orth-stab-a} we must in particular bound $\tnorm{\phi_{z;\el}}_{*;\omega_y}$. To this end we observe that for every $\varphi \in H^1_0(\omega_y)$ and for every $\el \subset \omega_y$ and $z \in \nodes_\el$ Friedrichs inequality \eqref{fried} and the scaling property \eqref{scaling-L2} imply
\begin{equation}
\label{intKphiz_vphi}
\int_\el\phi_{z;\el}\vphi\leq\norm{\phi_{z;\el}}_\el\norm{\vphi}_\el
\lesssim |\el|^{1/2}\min\{\kappa^{-1}, h_\el\}\tnorm{\vphi}_{\el}.
\end{equation}
Dividing by $\tnorm{\vphi}_{\el}$, taking the supremum over $\varphi \in H^1_0(\omega_y)$ and combining with \eqref{scaling-dual-k} gives \eqref{bi-orth-stab-a}.
\\
Finally, we prove \eqref{bi-orth-stab-b}. Thanks to the trace inequality \eqref{trace} and the Young inequality 
$ab\leq \frac{\veps}{2} a^2+\frac{1}{2\veps} b^2 $ with $\veps=\kappa^{1/2}$, we have, for every $\vphi\in H^1_0(\omega_y)$, 
\begin{equation}
\label{intFvphi}
\begin{split}
\int_\fa\vphi&\leq|\fa|^{1/2}\norm{\vphi}_\fa\lesssim|\fa|^{1/2}\norm{\vphi}^{1/2}_\el\norm{\nabla\vphi}^{1/2}_\el
\\
&\lesssim |\fa|^{1/2}\min\{h_\el, \kappa^{-1}\}^{1/2}\tnorm{\vphi}_\el,
\end{split}
\end{equation}
where $\el\subset\omega_\fa$. Therefore, we get
\begin{equation*}
\tnorm{\Ddelta_\fa}_{*;\omega_\fa} \lesssim |\fa|^{1/2}\min\{\kappa^{-1}, h_\el\}^{1/2}.
\end{equation*}
On the other hand, exploiting \eqref{bi-orth-stab-a}, we obtain
\begin{equation*}
\tnorm{\phi^*_\fa}_{\omega_\fa} \lesssim \tnorm{\psi_\fa}_{\omega_\fa} \Bigl( 1 + \sum_{\el \subset \omega_\fa} \sum_{z \in \nodes_\el} \tnorm{\phi^*_{z;\el}}_{\omega_\fa} \tnorm{\phi_{z;\el}}_{*;\omega_\fa} \Bigr) 
\lesssim \tnorm{\psi_\fa}_{\omega_\fa}
\end{equation*}
which gives \eqref{bi-orth-stab-b}.
\end{proof}

\subsection{A posteriori error estimator}\label{aposterr}

For every $z\in\verts$, we define the error estimators  
\begin{equation}
\Estim(U,z):=\tnorm{\interp f -\Op (U)}_{*;\omega_z}
\label{def_indic}
\end{equation}
with $\interp$ as in the previous section.  The following theorem states that the indicators together with the related oscillation provide a two-sided robust bound for the error of the Galerkin solution of \eqref{discr_sol}. 
\begin{thm}[Upper and Lower Bound]
The following global upper bound for the error holds true
\[
\tnorm{u-U}\lesssim \left(\sum_{z\in\verts}
\Estim(U,z)^2+\tnorm{f-\interp f}^2_{*;\omega_z}
\right)^{1/2}
\] 
Moreover the local lower bounds also hold
\[
\Estim(U,z)+\tnorm{f-\interp f}_{*;\omega_z}\lesssim \tnorm{u-U}_{\omega_z}, \qquad\forall z\in\verts.
\]
The hidden constants are independent of $\kappa$.
\end{thm}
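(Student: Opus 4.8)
The plan is to assemble the statement directly from the localization identity and the necessary-and-sufficient condition already established, so that almost no new work is required: the substance has been front-loaded into Proposition~\ref{P:loc}, Proposition~\ref{P:nec-suf-cond} and Lemma~\ref{L:inv-stab-concret}, and the theorem is essentially their corollary.

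For the global upper bound I would start from the equivalence \eqref{err-appx-sum-I}, which already gives $\tnorm{u-U}^2\lesssim\sum_{z\in\verts}\tnorm{f-\Op(U)}^2_{*;\omega_z}$ with a constant depending only on $\ShapePar$ and $d$. On each star I would insert $\interp f$ via the triangle inequality \eqref{ins-op}, recall the definition \eqref{def_indic} of $\Estim(U,z)$, and then square using $(a+b)^2\le 2a^2+2b^2$ to obtain $\tnorm{f-\Op(U)}^2_{*;\omega_z}\lesssim\Estim(U,z)^2+\tnorm{f-\interp f}^2_{*;\omega_z}$. Summing over $z\in\verts$ yields the claimed upper bound.

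For the local lower bounds the key input is the reverse inequality \eqref{inv-ineq} of Proposition~\ref{P:nec-suf-cond}, which is at our disposal precisely because Lemma~\ref{L:inv-stab-concret} guarantees that the operator $\interp$ satisfies the invariance and local-stability conditions \eqref{prop-op}. That inequality gives $\Estim(U,z)+\tnorm{f-\interp f}_{*;\omega_z}\lesssim\tnorm{f-\Op(U)}_{*;\omega_z}$ on each star, and I would chain it with the elementary bound \eqref{local-lower-f}, namely $\tnorm{f-\Op(U)}_{*;\omega_z}\le\tnorm{u-U}_{\omega_z}$, to conclude.

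Since every hidden constant in \eqref{err-appx-sum-I}, \eqref{inv-ineq} and \eqref{local-lower-f} depends only on $\ShapePar$ and $d$, robustness is automatic. The single point that deserves care---and which is really the crux of the whole construction rather than of this final proof---is that the local-stability constant hidden in \eqref{inv-ineq} be $\kappa$-independent. This is exactly what the squeezed bubble functions buy us: the scaling bounds \eqref{scaling-dual-k} and \eqref{fb:stab}, with the choice $\vartheta=\min\{1,h_\el^{-1}\kappa^{-1}\}$ in \eqref{theta}, are tuned so that the products in \eqref{bi-orth-stab} remain bounded uniformly in $\kappa$. Thus the main obstacle has already been discharged in Lemma~\ref{L:inv-stab-concret}; here I would only need to check that no step reintroduces a $\kappa$-dependent factor, which the explicit form of the bounds confirms.
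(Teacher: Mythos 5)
Your proof is correct and follows essentially the same route as the paper, whose own proof is precisely the one-line combination of \eqref{err-appx-sum-I}, \eqref{local-lower-f}, Proposition \ref{P:nec-suf-cond} and Lemmas \ref{L:inv-stab-abstract}--\ref{L:inv-stab-concret} that you spell out. Your explicit chaining (triangle inequality plus squaring for the upper bound, \eqref{inv-ineq} followed by \eqref{local-lower-f} for the lower bound) and the remark on $\kappa$-independence of the constants simply make the paper's intended argument explicit.
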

\begin{proof}
Combine Lemmas \ref{L:inv-stab-abstract}, \ref{L:inv-stab-concret} with Proposition \ref{P:nec-suf-cond} and \eqref{err-appx-sum-I}, \eqref{local-lower-f}.
\end{proof}

The indicators defined in \eqref{def_indic} can also be rewritten as $L^2$-norms  of polynomials on elements and on faces, thus mimicking
the standard structure of residual error indicators, and showing that they are computable, provided $\interp f$ is available. 
 For every $\el\in\tri$ and every $\fa\in\faces$ we set
\begin{equation*}
 \begin{aligned}
  r|_\el&:=\sum_{y\in\nodes_\el}\dual{f}{\phi^*_{y;\el}}\phi_{y;\el}-\kappa^2 U\\
  j|_\fa&:=\dual{f}{\phi^*_{\fa}}-[\nabla U\cdot n_\fa]_\fa
 \end{aligned}
\end{equation*}
 where $n_\fa$ is a unit vector normal to $\fa$, and $[\cdot]_\fa$ denotes the jump across $\fa$ in the direction of $n_\fa$. 
  
\begin{prop}
The error indicator defined in \eqref{def_indic} satisfies
\begin{equation*}
\begin{aligned}
 \Estim(U,z)&\approx \left(\sum_{\el\subset\omega_z}\min\{h_\el,\kappa^{-1}\}^2\norm{r}_\el^2\right)^{1/2}
+\left(\sum_{\fa\in\sigma_z}\min\{h_\fa,\kappa^{-1}\}\norm{j}^2_\fa\right)^{1/2}.
\end{aligned}
\end{equation*}
\end{prop}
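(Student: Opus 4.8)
The plan is to first identify $g:=\interp f-\Op(U)$ explicitly as an element of $\DiscrIm$ and then read off its local dual norm by duality. Since $\interp$ is invariant over $\DiscrIm$ (Lemma~\ref{L:inv-stab-abstract}) and $\Op(U)\in\DiscrIm$, we have $g=\interp\Res$ with $\Res=f-\Op(U)$, so by \eqref{abstract-op} the coordinates of $g$ are $\dual{\Res}{\phi^*_{z;\el}}$ on the $\phi_{z;\el}$ and $\dual{\Res}{\phi^*_\fa}$ on the $\Ddelta_\fa$. For the element coordinates I would integrate $\int_\el\nabla U\cdot\nabla\phi^*_{z;\el}$ by parts: as $U$ is affine and $\phi^*_{z;\el}\in H^1_0(\el)$ this term vanishes, and \eqref{dual-basis} gives $\dual{\Op(U)}{\phi^*_{z;\el}}=\kappa^2U(z)$, so the piecewise-$\Poly_1$ part of $g$ on $\el$ equals $r|_\el$. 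For the face coordinate I would insert the definition of $\phi^*_\fa$ and integrate $\dual{\Op(U)}{\psi_\fa}$ by parts element-by-element over $\omega_\fa$; using $\psi_\fa\in H^1_0(\omega_\fa)$, $\dual{\Ddelta_\fa}{\psi_\fa}=1$ from \eqref{fb:supp-psi}, and that the jump is constant on $\fa$, the jump across $\fa$ survives while the two reaction contributions cancel against the $\phi^*_{z;\el}$-correction, leaving $\dual{\Op(U)}{\phi^*_\fa}=[\nabla U\cdot n_\fa]$; hence the Dirac coordinate of $g$ is exactly $j|_\fa$. Consequently, for $\vphi\in H^1_0(\omega_z)$ only elements and faces meeting $z$ contribute, and $\dual{g}{\vphi}=\sum_{\el\subset\omega_z}\int_\el r\,\vphi+\sum_{\fa\in\sigma_z}j|_\fa\int_\fa\vphi$.

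Writing $m_\el:=\min\{h_\el,\kappa^{-1}\}$ and $m_\fa:=\min\{h_\fa,\kappa^{-1}\}$, so that $m_\el^{-1}=\max\{h_\el^{-1},\kappa\}$, the upper bound follows by bounding the two sums separately. For the element sum, Friedrichs \eqref{fried} on the whole star together with $\kappa\norm{\vphi}_{\omega_z}\le\tnorm{\vphi}_{\omega_z}$ gives $\norm{\vphi}_{\omega_z}\lesssim m_z\tnorm{\vphi}_{\omega_z}$ with $m_z\approx m_\el$ by shape-regularity, whence $\sum_{\el\subset\omega_z}\int_\el r\,\vphi\le\norm{r}_{\omega_z}\norm{\vphi}_{\omega_z}\lesssim(\sum_{\el\subset\omega_z}m_\el^2\norm{r}_\el^2)^{1/2}\tnorm{\vphi}_{\omega_z}$. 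For the face sum, \eqref{intFvphi} bounds $\int_\fa\vphi\lesssim|\fa|^{1/2}m_\fa^{1/2}\tnorm{\vphi}_\el$, and since $\norm{j}_\fa=\abs{j|_\fa}\,|\fa|^{1/2}$, a Cauchy--Schwarz over $\sigma_z$ with the finite overlap of the $\el\subset\omega_\fa$ yields $\sum_{\fa\in\sigma_z}j|_\fa\int_\fa\vphi\lesssim(\sum_{\fa\in\sigma_z}m_\fa\norm{j}_\fa^2)^{1/2}\tnorm{\vphi}_{\omega_z}$. Dividing by $\tnorm{\vphi}_{\omega_z}$ and taking the supremum gives $\Estim(U,z)\lesssim(\sum_{\el\subset\omega_z}m_\el^2\norm{r}_\el^2)^{1/2}+(\sum_{\fa\in\sigma_z}m_\fa\norm{j}_\fa^2)^{1/2}$.

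For the lower bound I would use realizing test functions. The element bubble $\bub_\el r|_\el\in H^1_0(\el)$ annihilates every Dirac term, and \eqref{eb-std-L2} gives $\dual{g}{\bub_\el r}=\norm{\bub_\el^{1/2}r}_\el^2\gtrsim\norm{r}_\el^2$, while \eqref{eb-std-H1} together with $0\le\bub_\el\le1$ gives $\tnorm{\bub_\el r}_\el\lesssim m_\el^{-1}\norm{r}_\el$; hence $m_\el\norm{r}_\el\lesssim\Estim(U,z)$ for every $\el\subset\omega_z$. Testing with $\psi_\fa$ isolates the face term only up to a reaction cross term, $\dual{g}{\psi_\fa}=j|_\fa+\int_{\omega_\fa}r\,\psi_\fa$; using $\abs{\dual{g}{\psi_\fa}}\le\Estim(U,z)\tnorm{\psi_\fa}_{\omega_\fa}$ with the scaling \eqref{fb:stab} and multiplying by $m_\fa^{1/2}|\fa|^{1/2}$ yields $m_\fa^{1/2}\norm{j}_\fa\lesssim\Estim(U,z)+m_\fa^{1/2}|\fa|^{1/2}\abs{\int_{\omega_\fa}r\,\psi_\fa}$. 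The main obstacle is this cross term: I would estimate $\norm{\psi_\fa}_\el\lesssim(\vartheta h_\fa)^{1/2}|\fa|^{-1/2}\approx m_\fa^{1/2}|\fa|^{-1/2}$ from \eqref{fb-std-H1} and the normalization $\int_\fa\bub_\fa\approx|\fa|$, so that $m_\fa^{1/2}|\fa|^{1/2}\abs{\int_{\omega_\fa}r\,\psi_\fa}\lesssim\sum_{\el\subset\omega_\fa}m_\fa\norm{r}_\el\approx\sum_{\el\subset\omega_\fa}m_\el\norm{r}_\el$, which the already-established element estimate bounds by $\Estim(U,z)$ because $\omega_\fa\subset\omega_z$ for $\fa\in\sigma_z$. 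Thus $m_\fa^{1/2}\norm{j}_\fa\lesssim\Estim(U,z)$ as well. Finally, since $\omega_z$ meets only a bounded number of elements and faces, taking maxima over $\el\subset\omega_z$ and over $\fa\in\sigma_z$ upgrades the single-term inequalities to the two sums, and adding them proves $(\sum_{\el\subset\omega_z}m_\el^2\norm{r}_\el^2)^{1/2}+(\sum_{\fa\in\sigma_z}m_\fa\norm{j}_\fa^2)^{1/2}\lesssim\Estim(U,z)$, which together with the second paragraph gives the claimed equivalence.
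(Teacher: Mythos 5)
Your proof is correct and takes essentially the same route as the paper: you arrive at the identical $L^2$-representation $\dual{\interp f-\Op(U)}{\vphi}=\sum_{\el\subset\omega_z}\int_\el r\vphi+\sum_{\fa\in\sigma_z}\int_\fa j\vphi$ (the paper via elementwise integration by parts, you by reading off the bi-orthogonal coordinates of $\interp\Res$ --- the same computation), use the same duality estimates as in \eqref{intKphiz_vphi} and \eqref{intFvphi} for the upper bound, and the same bubble-function tests $r\bub_\el$ and the squeezed face bubble for the lower bound, handling the reaction/element cross term exactly as the paper does by invoking the already-proved element estimate. Your only deviation --- testing with the normalized $\psi_\fa$ instead of the paper's $j\bub_\fa$ --- is an equivalent rescaling here because $j$ is constant on each face for lowest-order elements (so $\norm{j}_\fa=\abs{j|_\fa}\,|\fa|^{1/2}$), though the paper's choice together with \eqref{fb-std-L2} is the variant that would survive a higher-order extension where $j$ is a nonconstant polynomial.
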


\begin{proof}
We integrate by parts elementwise and derive the following $L^2$-re\-pre\-sen\-ta\-tion,
where the element and the face contributions are separated. For every $\vphi\in H^1_0(\omega_z)$ we write
\begin{equation*}
\begin{aligned}
&\dual{\interp f-\Op(U)}{\vphi}=
  \dual{\interp f}{\vphi}
      -\int_{\omega_z}\nabla U\nabla \vphi-\kappa^2\int_{\omega_z}U\vphi
\\
&\qquad=
\sum_{\el\subset\omega_z}\int_\el\left(\sum_{y\in\nodes_\el}\dual{f}{\phi^*_{y;\el}}\phi_{y;\el} -\kappa^2 U\right)\vphi
\\
&\qquad\qquad+\sum_{\fa\in\sigma_z}\int_\fa\left(\dual{f}{\phi^*_\fa}-[\nabla U\cdot n_\fa]_\fa\right) \vphi
\\
&\qquad=\sum_{\el\subset\omega_z}\int_\el r\vphi+\sum_{\fa\in\sigma_z}\int_\fa j\vphi.
\end{aligned}
\end{equation*}
Concerning the $\lesssim$-direction, we recall \eqref{intKphiz_vphi} and \eqref{intFvphi} and get
\begin{equation*}
\begin{aligned}
\dual{\interp f-\Op(U)}{\vphi}&= \sum_{\el\subset\omega_z}\int_\el r\vphi+\sum_{\fa\in\sigma_z}\int_\fa j\vphi
  \\
 &\lesssim \left(\sum_{\el\subset\omega_z}\min\{h_\el,\kappa^{-1}\}^2\norm{r}_\el^2\right)^{1/2}\tnorm{\vphi}_{\omega_z}
\\
&\qquad
+\left(\sum_{\fa\in\sigma_z}\min\{h_\fa,\kappa^{-1}\}\norm{j}^2_\fa\right)^{1/2}\tnorm{\vphi}_{\omega_z}.
\end{aligned}
\end{equation*}
Dividing by $\tnorm{\vphi}_{\omega_z}$ and taking the supremum over $\vphi\in H^1_0(\omega_z)$ proves 
 the first inequality.

Concerning the $\gtrsim$-direction, we start with proving
\begin{equation}
\label{ind-gtrsim-el-res}
\Estim(U,z)\gtrsim \left(\sum_{\el\subset\omega_z}\min\{h_\el,\kappa^{-1}\}^2\norm{r}_\el^2\right)^{1/2}.
\end{equation}
To this end, for every $\el\in\tri$, choose $\vphi= r\bub_\el$. Thanks to \eqref{eb-std-L2} 
we arrive at
\begin{equation}
\label{lb:el-1}
 \begin{aligned}
&\dual{\interp f-\Op(U)}{\vphi}
    =\int_\el r\vphi=\int_\el r^2\bub_\el\gtrsim 
\norm{r}^2_{\el}.
\end{aligned}
\end{equation}
Moreover $\norm{\bub_\el}_{\infty;\el}\leq1$ and \eqref{eb-std-H1} imply
\begin{equation*}
 \begin{aligned}
  \tnorm{\vphi}^2_{\el}=\norm{\nabla(r\bub_\el)}^2_\el+\kappa^2\norm{r\bub_\el}^2_\el
\lesssim \max\{h_\el^{-1},\kappa\}^2\norm{r}^2_\el.
 \end{aligned}
\end{equation*}
Combining this with \eqref{lb:el-1} and summing over $\el\subset\omega_z$ gives \eqref{ind-gtrsim-el-res}.
In order to prove 
\begin{equation}
\label{ind-gtrsim-fa-res}
\Estim(U,z)\gtrsim \left(\sum_{\fa\in\sigma_z}\min\{h_\el,\kappa^{-1}\}\norm{j}^2_\fa\right)^{1/2}
\end{equation}
 take, for every $\fa\in\sigma_z$, the function $\vphi=j\bub_\fa$, which yields
\begin{equation*}
 \begin{aligned}
&\dual{\interp f-\Op(U)}{\vphi}
=\sum_{\el\subset\omega_\fa}\int_\el r\vphi +\int_\fa j^2\bub_\fa
\end{aligned}
\end{equation*}
Exploiting \eqref{fb-std-L2}--\eqref{fb-std-H1} we derive
\begin{equation*}
 \begin{aligned}
\norm{j}_\fa^2&\lesssim \int_\fa j^2\bub_\fa\leq
\abs{\dual{\interp f-\Op(U)}{\vphi}}
+\sum_{\el\subset\omega_\fa}\abs{\int_\el r\vphi}
\\
&
\lesssim \tnorm{\interp f-\Op(U)}_{*;\omega_z}\tnorm{\vphi}_{\omega_\fa}
\\
&\qquad+ 
\left(\sum_{\el\subset\omega_\fa}\min\{h_\el,\kappa^{-1}\}^2\norm{r}^2_\el\right)^{1/2}
\max\{h_\fa^{-1},\kappa\}\norm{\vphi}_{\omega_\fa}
\\
&\lesssim\norm{j}_\fa\max\{h_\fa^{-1}, \kappa\}^{1/2}\tnorm{\interp f-\Op(U)}_{*;\omega_z}
\\
&\qquad+
\norm{j}_\fa\max\{h_\fa^{-1}, \kappa\}^{1/2}\left(\sum_{\el\subset\omega_\fa}\min\{h_\el,\kappa^{-1}\}^2\norm{r}^2_\el\right)^{1/2}
\end{aligned}
\end{equation*}
which combined with \eqref{ind-gtrsim-el-res} gives \eqref{ind-gtrsim-fa-res}. 
\end{proof}

Note that, differently from \eqref{alt-res}, the data term appears also in the face-supported part, and the approximation of $f$ in the element residual is a polynomial of first degree. 

\section{Conclusions}
\label{S:conc}
We derived a robust a posteriori error estimator for the Galerkin approximation of the reaction-diffusion equation \eqref{prob:rd} with conforming finite elements. The estimator together with the corresponding oscillation provide global upper and local lower bounds for the error in the energy norm. The definition of the oscillation follows the new approach of Kreuzer and Veeser \cite{Kreuzer.Veeser:16} and is robustly bounded by the error.  The indicators are computable under moderate assumptions on $f$ and can be used to drive an adaptive algorithm. Yet, the different structure of the oscillation and therefore of the jump and element residuals calls for new ideas for the proof of robust optimal rates of convergence.

\section*{Acknowledgements}
The first author was founded by the DFG under grant VE 397/1-1 AOBJ:612415 during her stay with the second author's group. The authors are grateful to A.~Veeser and C.~Kreuzer for making their manuscript available. 

\bibliographystyle{siam}

\begin{thebibliography}{10}

\bibitem{Ainsworth.Babuska:99}
{\sc M.~Ainsworth and I.~Babu{\v{s}}ka}, {\em Reliable and robust a posteriori
  error estimating for singularly perturbed reaction-diffusion problems}, SIAM
  J. Numer. Anal., 36 (1999), pp.~331--353.

\bibitem{Ainsworth.Vejchodsky:11}
{\sc M.~Ainsworth and T.~Vejchodsk{\'y}}, {\em Fully computable robust a
  posteriori error bounds for singularly perturbed reaction-diffusion
  problems}, Numer. Math., 119 (2011), pp.~219--243.

\bibitem{Ciarlet:78}
{\sc P.~G. Ciarlet}, {\em The finite element method for elliptic problems},
  vol.~4 of Studies in Mathematics and its Applications, North--Holland,
  Amsterdam, 1978.

\bibitem{CDN:12}
{\sc A.~Cohen, R.~DeVore, and R.~H. Nochetto}, {\em Convergence rates of {AFEM}
  with {$H^{-1}$} data}, Found. Comput. Math., 12 (2012), pp.~671--718.

\bibitem{Kreuzer.Veeser:16}
{\sc C.~Kreuzer and A.~Veeser}, {\em Oscillation in a posteriori error
  estimation}, Numer. Math.,  (2021), doi:10.1007/s00211-021-01194-8.

\bibitem{Stevenson:07}
{\sc R.~Stevenson}, {\em Optimality of a standard adaptive finite element
  method}, Found. Comput. Math., 7 (2007), pp.~245--269.

\bibitem{Stevenson:05rd}
{\sc R.~P. Stevenson}, {\em The uniform saturation property for a singularly
  perturbed reaction-diffusion equation}, Numer. Math., 101 (2005),
  pp.~355--379.

\bibitem{Veeser.Verfuerth:12}
{\sc A.~Veeser and R.~Verf\"urth}, {\em Poincar\'e constants for finite element
  stars}, IMA Journal of Numerical Analysis, 32 (2012), pp.~30--47.

\bibitem{Verfuerth:98}
{\sc R.~Verf{\"u}rth}, {\em Robust a posteriori error estimators for a
  singularly perturbed reaction-diffusion equation}, Numer. Math., 78 (1998),
  pp.~479--493.

\bibitem{Verfuerth:13}
{\sc R.~Verf{\"u}rth}, {\em A posteriori error estimation techniques for finite
  element methods}, Numerical Mathematics and Scientific Computation, Oxford
  University Press, Oxford, 2013.

\end{thebibliography}

\def\cprime{$'$} \def\cprime{$'$}

\end{document}